\newtheorem{thm}{Theorem}[section]
\newtheorem{lem}{Lemma}[section]
\newtheorem{cor}{Corollary}[section]
\newtheorem{pro}{Proposition}[section]
\newcommand{\nrmp}[1]{||#1 ||_{L^p}}
\newcommand{\agl}[1]{\langle#1\rangle}
\newcommand{\rn}[1]{\mathbb{R}^n}
\begin{document}
\title{On $L^p-$boundedness of pseudo-differential operators of Sj\"ostrand's class}
\author{Jayson Cunanan}
\address{Graduate School of Mathematics, Nagoya University, Furocho, Chikusa-ku, Nagoya 464-8602, Japan}
  \email{d12005n@math.nagoya-u.ac.jp}

  \subjclass[2010]{35S05, 42B35}
  \keywords{Pseudo-differential operators, modulation spaces, Wiener amalgam spaces}

\maketitle
\begin{abstract}
We extended the known result that symbols from modulation spaces $M^{\infty,1}(\mathbb{R}^{2n})$, also known as the Sj\"{o}strand's class, produce bounded operators in $L^2(\mathbb{R}^n)$, to general $L^p$ boundedness at the cost of lost of derivatives. Indeed, we showed that pseudo-differential operators acting from $L^p$-Sobolev spaces $L^p_s(\mathbb{R}^n)$ to $L^p(\mathbb{R}^n)$ spaces with symbols from the modulation space $M^{\infty,1}(\mathbb{R}^{2n})$ are bounded, whenever $s\geq n|1/p-1/2|.$ This estimate is sharp for all $1\leq p\leq\infty$.\end{abstract}

\section{Introduction}
Given a symbol $\sigma\in \mathcal{S}'(\mathbb{R}^{2n}),$ we define a pseudo-differential operator in $\mathbb{R}^n$ by
\begin{equation}
\sigma(x,D)f(x)=\int_{\mathbb{R}^n} e^{2\pi ix\cdot\xi}\sigma(x,\xi)\hat{f}(\xi)\ d\xi,\quad f\in \mathcal{S}(\mathbb{R}^{n}),
\end{equation}
where $\hat{f}(\xi)=\mathcal{F}f(\xi)=\int_{\mathbb{R}^n}e^{-i2\pi x\cdot\xi}f(x)\ dx$ is the Fourier transform of $f.$ It is clear that $\sigma(x,D)f$ is well-defined as a temperate distribution. These operators play essential roles in many fields including, partial differential equations (PDEs), quantum mechanics, and signal processing. Boundedness results of pseudo-differential operators on $L^p-$Sobolev spaces are of special interest since they imply regularity results of the corresponding PDEs. 

A thorough analysis of such operators have been carried on for H\"ormander's classes, $S^m_{\rho,\delta},m\in\mathbb{R},0\leq\delta\leq\rho\leq1,$ of smooth functions $\sigma(x,\xi)$ satisfying the estimates $$|\partial^{\alpha}_{x}\partial^{\beta}_{\xi}\sigma(x,\xi)|\leq C_{\alpha,\beta}(1+|\xi|)^{m-\rho|\beta|+\delta|\alpha|},$$ for all multi-indexes $\alpha$ and $\beta.$ In particular, the classical Calderon-Vaillancourt theorem \cite{CV71} states that symbols belonging to $S^0_{0,0}$ produce bounded operators on $L^2.$ In \cite{sj1,sj2}, Sj\"ostrand extended this result on $L^2$ boundedness by introducing a new class of symbols, containing $S^0_{0,0}$, which do not require boundedness of the derivatives of the symbols. This new class is later identified as the modulation space $M^{\infty,1}$, first introduced in time-frequency analysis by Feichtinger \cite{f1,f2,abl}. Roughly speaking, elements of $M^{p,q}$ are distributions with the same local regularity as a function whose Fourier transform is in $L^q$ and has decay properties of an $L^p$ function (see section 2 for details). In \cite{Gr06,GC99}, Gr\"ochenig and Heil then significantly extended Sj\"ostrand's result to boundedness of pseudo-differential operators on all modulation spaces.

For general $L^p$ boundedness, H\"ormander showed in \cite{hor67} that $m_p=n(1-\rho)|1/p-1/2|$ is the critical decreasing order for the $L^p$ boundedness of pseudo-differential operators in $S^m_{\rho,\delta}.$ Several sufficient conditions are known for the $L^p$ boundedness of pseudo-differential operators of H\"omander classes, we refer the reader to \cite{anv03} and the references therein for a detailed account. For symbols in $M^{\infty,1}$, it remains to know the minimal loss of derivatives required to achieve $L^p$ boundedness for all $1\leq p\leq\infty.$

Our goal in this note is to give sufficient and necessary conditions for the boundedness of pseudo-differential operators whose symbols come from $M^{\infty,1}(\mathbb{R}^{2n})$ acting on $L^p_s(\mathbb{R}^{n})$ to $L^p(\mathbb{R}^{n})$. The following theorem is our main result.

\begin{thm}
Let $1\leq p\leq\infty$. Then the pseudo-differential operator $\sigma: \mathcal{S}(\mathbb{R}^{n})\rightarrow\mathcal{S'}(\mathbb{R}^{n})$ having symbols in $M^{\infty,1}(\mathbb{R}^{2n})$ extends to a bounded operator from $L^p_s(\mathbb{R}^{n})$ to $L^p(\mathbb{R}^{n})$ if and only if $s\geq n|1/p-1/2|.$
\end{thm}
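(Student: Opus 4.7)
The plan is to prove the two implications separately, reducing sufficiency to an endpoint estimate that can be combined with Sjöstrand's known $L^2$-theorem via complex interpolation, and establishing necessity with a classical oscillatory Fourier multiplier. The two sufficiency endpoints to aim for are $(p,s)=(2,0)$, which is Sjöstrand's theorem, and $(p,s)=(\infty,n/2)$; the dual endpoint $(p,s)=(1,n/2)$ follows from the first because the Sjöstrand class $M^{\infty,1}$ is preserved under the passage from a symbol to the symbol of the formal adjoint operator. Given these data, Calder\'on's complex interpolation of Bessel potential spaces,
\[
[L^{p_0}_{s_0},L^{p_1}_{s_1}]_\theta=L^p_s,\qquad \tfrac{1}{p}=\tfrac{1-\theta}{p_0}+\tfrac{\theta}{p_1},\quad s=(1-\theta)s_0+\theta s_1,
\]
yields the critical exponent $s=n|1/p-1/2|$ throughout $1\leq p\leq\infty$.

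For the one nontrivial endpoint $L^\infty_{n/2}\to L^\infty$ I would use the short-time Fourier transform representation of the symbol. Fixing a Schwartz window $\Phi$ on $\mathbb{R}^{2n}$, the inversion formula $\sigma=C_\Phi\iint V_\Phi\sigma(z,\zeta)\,M_\zeta T_z\Phi\,dz\,d\zeta$ writes $\sigma(x,D)$ as a superposition of elementary pseudo-differential operators with symbols $M_\zeta T_z\Phi$. A direct kernel computation shows that $(M_\zeta T_z\Phi)(x,D)$ has a Schwartz kernel decaying rapidly in $|x-z_1|$ and in $|x-y+\zeta_2|$, but not in the phase-space direction $z_2$. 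The role of the Sobolev weight $\langle D\rangle^{-n/2}$ on the input side is precisely to localize the $z_2$-variable and convert the missing decay into a finite $L^\infty_z$ bound, which then combines with the $L^1_\zeta$-structure of $\|\sigma\|_{M^{\infty,1}}=\int\sup_z|V_\Phi\sigma(z,\zeta)|\,d\zeta$ through a Schur-type estimate.

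For the necessity I would test against the $x$-independent Fourier multiplier $\sigma(x,\xi)=\chi(\xi)e^{i|\xi|}$, where $\chi$ smoothly excises the origin. This symbol has bounded derivatives of all orders, hence lies in $S^0_{0,0}\subset M^{\infty,1}(\mathbb{R}^{2n})$, and the induced operator is exactly the classical Miyachi--Peral multiplier, known to be bounded $L^p_s\to L^p$ if and only if $s\geq n|1/p-1/2|$. The technical heart of the proof, and the step I expect to be the main obstacle, is the endpoint sufficiency estimate: the Schur-type analysis of $\iint V_\Phi\sigma(z,\zeta)(M_\zeta T_z\Phi)(x,D)\langle D\rangle^{-n/2}\,dz\,d\zeta$ as an $L^\infty\to L^\infty$ operator requires a careful matching of the elementary kernels' rapid decay against the Sobolev weight and the $L^\infty_zL^1_\zeta$ norm on $V_\Phi\sigma$, in a way where the sharp loss $n/2$ must genuinely emerge rather than be imposed by hand.
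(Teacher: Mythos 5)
Your route (two endpoints plus complex interpolation, with a multiplier counterexample for necessity) is genuinely different from the paper's, but both halves have real problems as written. For necessity, the test symbol $\chi(\xi)e^{i|\xi|}$ is the Peral--Miyachi \emph{wave} multiplier, and its sharp $L^p$ threshold is $(n-1)|1/p-1/2|$, not $n|1/p-1/2|$: the phase $|\xi|$ is homogeneous of degree $1$ and its Hessian has rank $n-1$ on the sphere, which is exactly why fixed-time wave estimates lose only $(n-1)|1/p-1/2|$ derivatives. The threshold $\alpha n|1/p-1/2|$ belongs to the non-homogeneous phases $e^{i|\xi|^{\alpha}}$ with $\alpha\neq 1$. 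So your single counterexample only proves $s\geq (n-1)|1/p-1/2|$. The repair is to use the family $\chi(\xi)e^{i|\xi|^{\alpha}}\in S^0_{1-\alpha,0}\subset S^0_{0,0}\subset M^{\infty,1}$ for $0<\alpha<1$ and let $\alpha\uparrow 1$; this is in substance what the paper does by combining the embedding $S^{-s}_{0,0}\hookrightarrow M^{\infty,1}_{\nu_{0,s}\otimes 1}$ with H\"ormander's sharpness theorem for $S^m_{0,0}$ (Theorem 4.1), after reducing weights on the symbol to weights on the source space via Proposition 4.1.

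For sufficiency, the endpoint $L^\infty_{n/2}\to L^\infty$ is precisely the hard step and the Schur-type scheme you describe cannot produce the loss $n/2$. Writing $\Phi=\varphi\otimes\psi$ and $z=(z_1,z_2)$, the elementary operator acts as $f\mapsto e^{2\pi i\zeta_1 x}\varphi(x-z_1)\,(K_{z_2}*f)(x+\zeta_2)$ with $\widehat{K_{z_2}}(\xi)=\psi(\xi-z_2)\langle\xi\rangle^{-s}$, so the best uniform bound is $\|K_{z_2}\|_{L^1}\lesssim\langle z_2\rangle^{-s}$; since $\sup_z|V_\Phi\sigma(z,\zeta)|$ carries no decay in $z$, an $L^1$ (Schur) integration in $z_2$ over $\mathbb{R}^n$ forces $s>n$ --- twice the claimed loss. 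To reach $s=n/2$ you must exploit the almost-orthogonality of the frequency blocks $\psi(D-z_2)$, i.e.\ run Cauchy--Schwarz in $z_2$, and that $\ell^2$-in-frequency structure is exactly what the paper's factorization through the Wiener amalgam space $W^{p,2}$ packages: $\sigma(x,D)$ is bounded on $W^{p,2}$ by Cordero--Tabacco--Wahlberg, and $L^p_s\hookrightarrow W^{p,2}\hookrightarrow L^p$ holds for $s\geq n(1/p-1/2)$, $1\leq p\leq 2$, with $p>2$ handled by duality via the Weyl calculus. Finally, note that abstract complex interpolation of Bessel-potential spaces is standard only for $1<p<\infty$; with the endpoints at $p=1,\infty$ you would need Stein interpolation for the analytic family $\sigma(x,D)\langle D\rangle^{-nz/2}$ rather than the identity $[L^{p_0}_{s_0},L^{p_1}_{s_1}]_\theta=L^p_s$ as stated. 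None of these gaps is unfixable, but as it stands the two decisive steps --- the sharp endpoint estimate and the sharp counterexample --- are the ones that are missing or incorrect.
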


In classical literatures for $L^p$ boundedness of operators in H\"ormander's classes, results often require much stronger assumption, than simply belonging to a particular $S^m_{\rho,\delta},$ when dealing with the endpoints $p=1,\infty.$ As an illustration, it is known that the converse of Theorem 4.1 in Section 4 does not hold for $p=1$
 and $p=\infty.$ Although, Fefferman proved in \cite{fef73} the converse of Theorem 4.1 for the case $1<p<\infty.$ Here, Theorem 1.1 shows the advantage of taking symbols in $M^{\infty,1}(\mathbb{R}^{2n})$ since we are able to show boundedness for all $1\leq p\leq\infty.$

We organize this note as follows. Section 2 will consist of notations, definitions and properties of the involved functions spaces. In Section 3 and Section 4, we prove the sufficient and necessary conditions, respectively, for the boundedness of pseudo-differential operators $L^p_s(\mathbb{R}^{n})\rightarrow L^p(\mathbb{R}^{n}).$

\section{Preliminaries}

\textbf{Notations.} The Schwartz class of test functions on $\mathbb{R}^n$ shall be denoted by $\mathcal{S}(\mathbb{R}^n)$ and its dual, the space of tempered distributions, by $\mathcal{S}'(\mathbb{R}^n)$. The $L^p(\mathbb{R}^n)$ norm is given by $\nrmp{f}=(\int_{\mathbb{R}^n} |f(x)|^p\ dx)^{1/p} $ whenever $1\leq p<\infty,$ and $||f||_{L^{\infty}}=\text{ess.sup}_{x\in\mathbb{R}^n}|f(x)|$. The Fourier transform of a function $f\in \mathcal{S}(\mathbb{R}^n)$ is given by $$\mathcal{F}f(\xi)=\hat{f}(\xi)=\int_{\mathbb{R}^n}e^{-i2\pi x\cdot\xi}f(x)\ dx$$ which is an isomorphism of the Schwartz space $\mathcal{S}(\mathbb{R}^n)$ onto itself that extends to the tempered distributions $\mathcal{S'}(\mathbb{R}^n)$ by duality. The inverse Fourier transform is given by $\mathcal{F}^{-1}f(x)=\check{f}(x)=\int_{\mathbb{R}^n}e^{i2\pi \xi\cdot x}f(\xi)\ d\xi$. Given $1\leq p\leq\infty,$ we denote by $p'$ the conjugate exponent of $p$ (i.e. $1/p+1/p'=1$). The translation operator is defined by $T_xf(t)=f(t-x)$. We now recall the definitions of the function spaces to be used in this article.

\textbf{$L^p$-Sobolev:} We adapted Stein's notation \cite{ste} and define the $L^p$-Sobolev norm by
$$||f||_{L^p_s}=||((1+|\cdot|^2)^{s}\widehat{f}(\cdot))^\vee||_{L^p}.$$
We remark that this notation should not be interchanged with the set of all $f$ such that $(1+|x|^2)^{s/2}f$ belongs to $L^p.$

\textbf{Modulation spaces}. Let $ s\in \mathbb{R}, $ we denote $ \agl{\xi}^s=(1+|\xi|^2)^{s/2},\small \xi\in\mathbb{R}^n .$ The class of weight functions to be used in this note will be denoted by $\nu_{s_1,s_2}(x,\xi)=\agl{x}^{s_1}\agl{\xi}^{s_2},s_i\in\mathbb{R},i=1,2,$ on $\mathbb{R}^{2n}$ or $m(x,\xi,\zeta_1,\zeta_2)=\agl{x}^{s_1}\agl{\xi}^{s_2}=(\nu_{s_1,s_2}\otimes1)(x,\xi,\zeta_1,\zeta_2),$ on $\mathbb{R}^{4n}$. For $ 1\leq p,q\leq \infty,$ the modulation space $ M^{p,q}_m $ consists of all tempered distributions $f\in \mathcal{S}'(\mathbb{R}^n)$ such that the norm
$$||f||_{M^{p,q}_m}=\left(\int\limits_{\mathbb{R}^n}\left(\int\limits_{\mathbb{R}^n}|V_gf(x,\xi)|^pm(x,\xi)^p\ dx\right)^{q/p}d\xi\right)^{1/q},$$
is finite, with usual modifications if $p$ or $q= \infty$. Here $V_gf$ denotes the short-time Fourier transform (STFT) of $f\in\mathcal{S'}(\mathbb{R}^n)$ with respect to the window $0\neq g\in\mathcal{S}(\mathbb{R}^n)$ defined by $$V_gf(x,\xi)=\int_{\mathbb{R}^n}f(y)\overline{g(y-x)}e^{-2\pi iy\cdot\xi}\ dy.$$ The modulation space $M^{p,q}_m(\mathbb{R}^n)$ is a Banach space whose norm is independent of the choice of the window $g.$ If $m\equiv1$, we write $M^{p,q}$. A weighted modulation space in $\mathbb{R}^{2n}$ will be denoted by $M^{p,q}_{\nu_{s_1,s_2}\otimes1}(\mathbb{R}^{2n}).$

\textbf{Wiener amalgam spaces:} For $ 1\leq p,q\leq \infty,$ $ s\in\mathbb{R}$ and $0\neq g\in\mathcal{S}$, the Wiener amalgam space $ W^{p,q}_s(\mathbb{R}^n) $ consists of all tempered distributions $f\in \mathcal{S}'(\mathbb{R}^n)$ such that the norm
$$||f||_{W^{p,q}_s}:=||f||_{W({\mathcal F}L^q_s, L^p)}=\left(\int\limits_{\mathbb{R}^n}\left(\int\limits_{\mathbb{R}^n}|\mathcal{F}(f\cdot T_yg)(\omega)|^q\agl{\omega}^{sq}\ d\omega\right)^{p/q}dy\right)^{1/p},$$
is finite, with usual modifications if $p$ or $q= \infty$.
If $ s=0 $ we simply write $ W^{p,q} $ instead of $ W_0^{p,q} .$ We note that this definition is independent of the choice of window $g$. We denote the closure of the Schwartz class $\mathcal{S}(\mathbb{R}^n)$ in the $W^{p,q}_s$-norm by $\mathcal{W}^{p,q}_s(\mathbb{R}^n).$ If $1\leq p,q<\infty$ then $\mathcal{W}^{p,q}_s=W^{p,q}_s.$

We collect properties of Wiener amalgam spaces in the following lemma. We note also that analogous properties hold for modulation spaces. In fact, $\mathcal{F}W^{p,q}=M^{p,q}.$
\begin{lem}
Let $p,q,p_i.q_i\in[1,\infty]$ for $i=1,2$ and $s_j\in\mathbb{R}$ for $\ j=1,2.$ Then

\begin{enumerate}
\item $\mathcal{S}(\mathbb{R}^n)\hookrightarrow W^{p,q}(\mathbb{R}^n)\hookrightarrow \mathcal{S'}(\mathbb{R}^n);$
\item $\mathcal{S}$ is dense in $W^{p,q}$ if $p$ and $q<\infty;$
\item If $q_1\leq q_2$ and $p_1\leq p_2$, then $W^{p_1,q_1}\hookrightarrow W^{p_2,q_2};$
\item If $s_1\geq s_2$, then $W^{p,q}_{s_1}\hookrightarrow W^{p,q}_{s_2};$
\item $\agl{D}^{-s}:W^{p,q}\rightarrow W^{p,q}_{s}$, $f \mapsto (\widehat{f}(\cdot)
\langle \cdot \rangle^{-s})^\vee$ , is an isomorphism;
\item (Convolution) If $\mathcal{F}L^{q_1}\ast\mathcal{F}L^{q_2}\hookrightarrow\mathcal{F}L^{q}$ and $L^{p_1}\ast L^{p_2}\hookrightarrow L^{p},$ then $$W(\mathcal{F}L^{q_1},L^{p_1})\ast W(\mathcal{F}L^{q_2},L^{p_2})\hookrightarrow W(\mathcal{F}L^{q},L^p).$$ 
\item (Complex interpolation) For $0<\theta<1.$ Let $\dfrac{1}{p}=\dfrac{\theta}{p_1}+\dfrac{1-\theta}{p_2}$, $\dfrac{1}{q}=\dfrac{\theta}{q_1}+\dfrac{1-\theta}{q_2}$ and $s=\theta s_1+ (1-\theta)s_2.$ Then
$$[\mathcal{W}^{p_1,q_1}_{s_1}, \mathcal{W}^{p_2,q_2}_{s_2}]_{[\theta]}=\mathcal{W}^{p,q}_s;$$
\item (Duality) $(\mathcal{W}^{p,q}_s)'= W^{p',q'}_{-s},$ where $1/p+1/p'=1=1/q+1/q',\quad p,q\neq\infty.$
\end{enumerate}
\end{lem}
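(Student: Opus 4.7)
The strategy is to reduce every item to known facts about weighted mixed-norm Lebesgue spaces by using the equivalent \emph{discrete} description of the Wiener amalgam norm. Fix a smooth bounded uniform partition of unity $\{\psi_k = T_k \psi_0\}_{k \in \mathbb{Z}^n}$ with $\psi_0 \in \mathcal S$ and $\sum_k \psi_k \equiv 1$; a standard argument (averaging $y \mapsto \|\mathcal F(f\, T_y g)\|_{L^q_s}$ over each unit cube and exploiting window-independence) gives the equivalence
$$\|f\|_{W^{p,q}_s} \asymp \bigl\|\, \|\widehat{f \psi_k}\|_{L^q_s}\, \bigr\|_{\ell^p(k)}.$$
I will work with this discrete description throughout, so that the outer index $p$ acts like $\ell^p$ on $\mathbb Z^n$ and the inner index $q$ acts on functions whose Fourier preimages are compactly supported in a unit cube.

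Items (1)--(5) are then essentially routine. For (1) and (2), if $f \in \mathcal S$ then every $f \psi_k$ is Schwartz with seminorms of order $\langle k \rangle^{-N}$ for all $N$, so $\|\widehat{f \psi_k}\|_{L^q_s}$ is rapidly decreasing in $k$, hence $\ell^p$-summable; the embedding into $\mathcal S'$ and density of $\mathcal S$ when $p,q<\infty$ then follow from a standard truncation-plus-mollification argument. For (3) I apply the inclusion $\ell^{p_1} \hookrightarrow \ell^{p_2}$ outside together with, on the inside, the inclusion $L^{q_1} \hookrightarrow L^{q_2}$ applied to the functions $\widehat{f \psi_k}$ (which have uniformly controlled concentration by the Schwartz window). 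Item (4) is monotonicity of $\langle \omega \rangle^{s}$ in $s$. For (5), the identity $\mathcal F(\langle D \rangle^{-s} f \cdot T_y g)(\omega) = (\langle \cdot \rangle^{-s}\widehat f) * \widehat{T_y g}(\omega)$ combined with the Peetre inequality $\langle \omega \rangle^{s} \lesssim \langle \omega - \eta \rangle^{|s|}\langle \eta \rangle^{s}$ converts the Fourier-side multiplier $\langle \cdot \rangle^{-s}$ into a shift of the internal weight by $s$, giving the isomorphism.

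The main obstacle is (6), which is Feichtinger's amalgam convolution theorem. The plan is to insert the double partition $\sum_{j,l} \psi_j \otimes \psi_l$ into $f_1 \ast f_2$, observe that for each unit cube $Q_k$ only $O(1)$ pairs $(j,l)$ contribute to $(f_1\ast f_2)\psi_k$ because $\operatorname{supp}(\psi_j\ast\psi_l)$ has diameter bounded by a universal constant, and then apply the hypothesized $L^{p_1}\ast L^{p_2}\hookrightarrow L^p$ and $\mathcal F L^{q_1}\ast\mathcal F L^{q_2}\hookrightarrow\mathcal F L^q$ relations termwise to each pair. Resumming via Young's inequality on $\ell^p(\mathbb Z^n)$ yields the claimed amalgam convolution. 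The technical core is the bookkeeping of support intersections between the outer and inner amalgam layers, which I expect to be the most delicate step.

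Finally, (7) and (8) follow from realizing $\mathcal W^{p,q}_s$ as a retract of the vector-valued Lebesgue space $\ell^p(\mathbb Z^n;\, L^q_s(\mathbb R^n))$ via the analysis map $f \mapsto (\widehat{f \psi_k})_k$, with synthesis map $(h_k)_k \mapsto \sum_k \widetilde\psi_k\cdot h_k^{\vee}$ (for a dual BUPU $\{\widetilde\psi_k\}$) providing a bounded left inverse. Stein's complex interpolation theorem for weighted vector-valued mixed-norm $L^p$ spaces, transferred through this retraction, yields (7); the duality identity for the same ambient space, together with density of $\mathcal S$ when $p,q<\infty$ to make the pairing non-degenerate, yields (8).
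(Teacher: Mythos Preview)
Your outline is sound and follows the standard route (discrete BUPU description, retract onto vector-valued mixed-norm spaces, Feichtinger's support-bookkeeping for the convolution relation). One small point in (3): the phrase ``the inclusion $L^{q_1}\hookrightarrow L^{q_2}$'' is not literally correct on $\mathbb{R}^n$; what you actually use is that each $f\psi_k$ is supported in a fixed cube, so $\widehat{f\psi_k}=\widehat{f\psi_k}\ast\widehat{\tilde\psi}$ for a fixed Schwartz $\tilde\psi$, and Young then gives $\|\widehat{f\psi_k}\|_{L^{q_2}}\lesssim\|\widehat{f\psi_k}\|_{L^{q_1}}$ uniformly in $k$. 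Your parenthetical about ``uniformly controlled concentration'' is pointing at this, but it should be stated as a convolution estimate rather than an $L^q$ inclusion.

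As for the comparison: the paper does not prove this lemma at all. It simply records the eight properties and writes ``The proofs of these statements can be found in \cite{f1,f3,abl,tof2}.'' So you have supplied genuinely more than the paper does here. Your argument is essentially Feichtinger's original one (the references the paper cites), so in that sense you are reconstructing exactly what lies behind the citation rather than taking an alternative route.
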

The proofs of these statements can be found in \cite{f1,f3,abl,tof2}. 

Pseudodifferential operators in the form (1) are referred as Kohn-Nirenberg correspondence. Now we give another form which is called the Weyl quantization, and can be defined as follows:
$$L_{\sigma}f(x)=\int\int e^{2\pi (x-y)\xi}\sigma(\dfrac{x+y}{2},\xi)f(y)\ dyd\xi.$$
Our interest in using this form is due to its adjoint, namely, $L^{*}_{\sigma}=L_{\bar{\sigma}}$. Moreover, for symbols in modulation spaces, boundedness results can be obtained using either the Kohn-Nirenberg or Weyl form. This statement will be made clear by Remark 2.1.

We quote the following result in \cite[Theorem 14.3.5]{grtfa}.

\begin{thm}
Let $T$ be a continuous linear operator mapping $\mathcal{S}(\mathbb{R}^n)$ into $\mathcal{S}'(\mathbb{R}^n).$ Then there exist tempered distributions $K,\sigma,a\in\mathcal{S}'(\mathbb{R}^{2n})$, such that $T$ has the following representations:
\begin{enumerate}
\item[(i)] as an integral operator $\langle Tf,g\rangle=\langle K,g\otimes\bar{f}\rangle,$ for $f,g\in\mathcal{S}(\mathbb{R}^n)$;
\item[(ii)] as a pseudodifferential operator $T=L_{\sigma},$ with Weyl symbol $\sigma$ and $T=a(x,D)$ with Kohn-Nirenberg symbol $a$.
\end{enumerate}
The relations between $K,\sigma,a$ are given by
\begin{equation}
\sigma=\mathcal{F}_2\tau_sK,\quad a=\mathcal{U}\sigma
\end{equation}
where $\mathcal{F}_2$ is the partial Fourier transform in the second variable, $\tau_s$ is the symmetric coordinate transformation $\tau_sK(x,y)=K(x+y/2,x-y/2)$ and the operator $\mathcal{U}$ is defined by $\widehat{\mathcal{U}\sigma}(\omega,u)=e^{\pi i\omega u}\hat{\sigma}(\omega,u).$
\end{thm}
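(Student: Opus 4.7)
The plan is to deduce the statement from the Schwartz kernel theorem, then verify the two explicit formulas $\sigma=\mathcal{F}_2\tau_sK$ and $a=\mathcal{U}\sigma$ by showing that the Weyl and Kohn--Nirenberg operators built from them have the same distribution kernel $K$.

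First, I would invoke the Schwartz kernel theorem: since $T$ is continuous from $\mathcal{S}(\mathbb{R}^n)$ to $\mathcal{S}'(\mathbb{R}^n)$, the bilinear form $(f,g)\mapsto \langle Tf,\bar g\rangle$ is separately continuous on $\mathcal{S}(\mathbb{R}^n)\times \mathcal{S}(\mathbb{R}^n)$, hence extends by the nuclear theorem to a unique $K\in \mathcal{S}'(\mathbb{R}^{2n})$ with $\langle Tf,g\rangle = \langle K,g\otimes \bar f\rangle$. This gives (i) and reduces (ii) to the assertion that the operators $L_\sigma$ and $a(x,D)$ admit the same distributional kernel $K$ whenever $\sigma$ and $a$ are defined by the displayed relations.

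Next I would define $\sigma:=\mathcal{F}_2\tau_sK \in \mathcal{S}'(\mathbb{R}^{2n})$, observing that $\tau_s$ (a linear change of variables of Jacobian $1$) and $\mathcal{F}_2$ (a partial Fourier transform) are both isomorphisms of $\mathcal{S}'(\mathbb{R}^{2n})$. To identify $L_\sigma$ with $T$, I would compute the kernel of $L_\sigma$ directly from the formula
\[
L_{\sigma}f(x)=\iint e^{2\pi i(x-y)\xi}\sigma\!\left(\tfrac{x+y}{2},\xi\right)f(y)\,dy\,d\xi.
\]
At the level of test functions, pairing against $g$ gives $\langle L_\sigma f,g\rangle=\langle \sigma, F_{f,g}\rangle$ for a suitable $F_{f,g}\in\mathcal{S}(\mathbb{R}^{2n})$. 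Unwinding $\sigma=\mathcal{F}_2\tau_s K$ via the transpose relations
\[
\langle \mathcal{F}_2 h, \phi\rangle = \langle h,\mathcal{F}_2\phi\rangle,\qquad \langle \tau_s h,\phi\rangle = \langle h,\tau_s^{-1}\phi\rangle,
\]
leads, after the change of variables $u=(x+y)/2$, $z=x-y$, to $\langle L_\sigma f,g\rangle = \langle K,g\otimes\bar f\rangle$, which matches $\langle Tf,g\rangle$. Hence $L_\sigma=T$.

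Finally, for the Kohn--Nirenberg representation I would pass through the Fourier side. If $a\in\mathcal{S}(\mathbb{R}^{2n})$, the kernel of $a(x,D)$ is $K_a(x,y)=\int e^{2\pi i(x-y)\xi}a(x,\xi)\,d\xi=(\mathcal{F}_2^{-1}a)(x,x-y)$, and similarly the kernel of $L_\sigma$ equals $(\mathcal{F}_2^{-1}\sigma)(\tfrac{x+y}{2},x-y)$. Comparing these two presentations of the same kernel $K$ translates, after a Fourier transform in both variables and the change of variables implicit in $\tau_s$, into the multiplier identity $\widehat{a}(\omega,u)=e^{\pi i\omega u}\widehat{\sigma}(\omega,u)$, i.e.\ $a=\mathcal{U}\sigma$. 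Since $\mathcal{U}$ is multiplication on the Fourier side by a unimodular smooth symbol, it is an isomorphism of $\mathcal{S}'(\mathbb{R}^{2n})$, so the relation extends from Schwartz symbols to all tempered distributions by density/duality.

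The expected main obstacle is the distributional justification of the oscillatory integrals defining $L_\sigma f$ and $a(x,D)f$: these do not converge absolutely, and the identifications above must be carried out by transposition against test functions rather than by naive Fubini. Once one sets up the computations as dualities against $\mathcal{S}(\mathbb{R}^{2n})$, the change of variables inside $\tau_s$ and the Fourier-multiplier interpretation of $\mathcal{U}$ handle themselves, and the uniqueness of the Schwartz kernel forces the identification of $T$ with both $L_\sigma$ and $a(x,D)$.
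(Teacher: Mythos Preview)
Your outline is correct and is essentially the standard argument (Schwartz kernel theorem for (i), then computing the kernels of $L_\sigma$ and $a(x,D)$ and matching them to $K$ via the changes of variable $\tau_s$ and the Fourier-side multiplier $\mathcal{U}$). Note, however, that the paper does \emph{not} supply its own proof of this theorem: it is quoted verbatim from Gr\"ochenig's book \cite[Theorem~14.3.5]{grtfa}, so there is no in-paper proof to compare against. Your sketch is in line with the proof given there, and your identification of the only delicate point---that the oscillatory integrals must be interpreted by duality against Schwartz test functions rather than as absolutely convergent integrals---is exactly right.
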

\textbf{Remark 2.1.}
Since, $\sigma(x,D)=L_{\mathcal{U}^{-1}\sigma},$ and the fact that $M^{\infty,1}$ is invariant under $\mathcal{U}^{-1}$ implies that boundedness results for Kohn-Nirenberg and Weyl form are interchangeable for symbols in $M^{\infty,1}$ (see \cite[Corollary 14.5.5]{grtfa}).

\section{Sufficient condition}
The proof for the sufficient conditions of Theorem 1.1 utilize the inclusion relations between $L^p-$Sobolev and Wiener amalgam spaces $W^{p,q}$ described in \cite{cun2}. For $(1/p,1/q)\in [0,1]\times [0,1]$, the indices $\tau_1(p,q)$ and $\tau_2(p,q)$ are defined as follows:

\begin{equation*}
\tau_1(p,q) =
\begin{cases}
0 & \text{if $(1/p,1/q)\in I^*_1:$ min$(1/p',1/2)\geq 1/q$}\\
1/p+1/q-1 & \text{if $(1/p,1/q)\in I^*_2:$ min$(1/q,1/2)\geq 1/p'$}\\
1/q-1/2 & \text{if $(1/p,1/q)\in I^*_3:$ min$(1/p',1/q)\geq 1/2$}
\end{cases}
\end{equation*}

\begin{equation*}
\tau_2(p,q) =
\begin{cases}
0 & \text{if $(1/p,1/q)\in I_1:$ max$(1/p',1/2)\leq 1/q$}\\
1/p+1/q-1 & \text{if $(1/p,1/q)\in I_2:$ max$(1/q,1/2)\leq 1/p'$}\\
1/q-1/2 & \text{if $(1/p,1/q)\in I_3:$ max$(1/p',1/q)\leq 1/2$}
\end{cases}
\end{equation*}

where $1/p+1/p'=1=1/q+1/q'.$ Refer to Figure 1 for a visualization.

\begin{thm}[\cite{cun2}]
Let $1\leq p,q\leq\infty$ and $s\in\mathbb{R}.$ Then $L_s^p\hookrightarrow W^{p,q}$ if one of the following conditions is satisfied.
\begin{enumerate}
\item $p>q, q<2$ and $s> n\tau_1(p,q);$
\item $p\neq 1,$ max$(1/p,1/2)\geq1/q$ and $s\geq n\tau_1(p,q);$
\item $p=1, q=\infty$ and $s\geq n\tau_1(1,\infty);$
\item $p=1, q\neq\infty$ and $s> n\tau_1(1,q).$
\end{enumerate}
%Conversely, if $L_s^p\hookrightarrow W^{p,q}$, then $s\geq n\tau_1(p,q).$

\end{thm}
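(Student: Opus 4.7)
My plan is to verify the embedding on a small number of extremal $(1/p,1/q,s)$ configurations and then reconstruct the full picture via the complex interpolation identity of Lemma~2.1(7) and, for region $I_2^*$, the duality of Lemma~2.1(8). Throughout I fix a window $0\neq g\in\mathcal{S}(\mathbb{R}^n)$ and exploit the identification $\mathcal{F}(f\cdot T_y g)(\omega)=V_{\bar g}f(y,\omega)$, so that $\|f\|_{W^{p,q}}$ is a mixed $L^p_yL^q_\omega$ norm of the STFT of $f$.

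The first family of base cases lives in region $I_1^*$, where $s=0$ suffices. Since $q\ge\max(p',2)$ there, Hausdorff--Young in $\omega$ gives $\|\mathcal{F}(fT_y g)\|_{L^q}\le\|fT_y g\|_{L^{q'}}$, and the remaining $y$-integral equals $\bigl\||f|^{q'}\ast|\tilde g|^{q'}\bigr\|_{L^{p/q'}}^{1/q'}$, which Young's convolution inequality controls by $\|g\|_{L^{q'}}\|f\|_{L^p}$ once $q'\le p$ (equivalently $q\ge p'$). The second, genuinely non-trivial endpoint is the corner $p=2$, $q=1$ with $s>n/2$. Starting from the Cauchy--Schwarz bound $\|\mathcal{F}(fT_y g)\|_{L^1}\le\|\langle\omega\rangle^{-s}\|_{L^2}\|\langle D\rangle^s(fT_y g)\|_{L^2}$, I square, integrate in $y$, and apply the STFT covariance $|V_{\bar g}f(y,\omega)|=|V_{\widehat{\bar g}}\hat f(\omega,-y)|$ together with Plancherel to collapse the $y$-integral into $(|\hat f|^2\ast|\hat g|^2)(\omega)$; Peetre's inequality $\langle\omega\rangle^{2s}\lesssim\langle\xi\rangle^{2s}\langle\omega-\xi\rangle^{2s}$ then bounds the whole expression by $\|f\|_{L^2_s}^2\|g\|_{L^2_s}^2$, yielding $L^2_s\hookrightarrow W^{2,1}$.

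With these endpoints established I fill in the two non-trivial regions by interpolation. Inside $I_3^*$ I interpolate first in $q$ between $L^2\hookrightarrow W^{2,2}$ and $L^2_{n/2+\varepsilon}\hookrightarrow W^{2,1}$ to obtain the slice $p=2$, then in $p$ between that slice and a parallel $p=\infty$ slice, the latter supplied by the same Cauchy--Schwarz plus STFT scheme with $\|f\|_{L^\infty}$ pulled out pointwise. Region $I_2^*$ is then obtained from Lemma~2.1(8), after the observation that $(p,q)\in I_2^*$ exactly when $(p',q')\in I_1^*$, and that the duality exchanges $L^p_s$ with $L^{p'}_{-s}$. Upgrading the strict inequality to the sharp endpoint $s=n\tau_1(p,q)$ in parts~(2)--(3) requires choosing the interpolation parameter so that $\langle\omega\rangle^{-s}$ lands exactly on the borderline $L^2$-scale, at which point the Cauchy--Schwarz step becomes lossless.

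The principal obstacle is the single corner computation for $p=2,\ q=1$; everything else is interpolation bookkeeping. A secondary subtlety is the strict-versus-non-strict dichotomy in the conclusion: the interpolation scheme above can only reach the endpoint $s=n\tau_1(p,q)$ when both interpolation endpoints are admissible, which fails precisely along the diagonal $p>q$, $q<2$ of case~(1) and at $p=1$, $q<\infty$ in case~(4), consistent with the theorem asking for strict inequality in exactly those cases.
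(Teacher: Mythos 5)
This theorem is quoted in the paper from \cite{cun2} without proof, so there is no in-paper argument to compare against; I can only assess your proposal on its own terms. Your two base computations are sound: in $I_1^*$ the chain Hausdorff--Young in $\omega$ (legitimate since $q\ge 2$ there), the identity $\int\|fT_yg\|_{L^{q'}}^p\,dy=\||f|^{q'}\ast|\tilde g|^{q'}\|_{L^{p/q'}}^{p/q'}$, and Young's inequality (legitimate since $p\ge q'$ there) do give $L^p\hookrightarrow W^{p,q}$; and the corner $L^2_s\hookrightarrow W^{2,1}$ for $s>n/2$ via Cauchy--Schwarz, the covariance $|V_gf(y,\omega)|=|V_{\hat g}\hat f(\omega,-y)|$, Plancherel in $y$, and Peetre's inequality is correct. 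The machinery meant to propagate these to the remaining regions, however, has two genuine breaks.

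First, the duality step for $I_2^*$ points the wrong way. By Lemma 2.1(8), dualizing an embedding $L^{p'}\hookrightarrow \mathcal{W}^{p',q'}$ yields $W^{p,q}\hookrightarrow L^{p}$, i.e.\ an instance of Theorem 3.2, not another instance of Theorem 3.1: an embedding \emph{into} a Wiener amalgam space dualizes to an embedding \emph{out of} one. (The asserted index correspondence is also false as stated: $(p,q)=(5/4,4)$ lies in $I_2^*$, yet $(p',q')=(5,4/3)$ fails the condition $\min(1/p,1/2)\ge 1/q'$ defining $I_1^*$.) Consequently your scheme never reaches region $I_2^*$, which contains the only instance this paper actually needs, namely $L^p_{n(1/p-1/2)}\hookrightarrow W^{p,2}$ for $1\le p\le 2$. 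Second, the non-strict endpoint $s=n\tau_1(p,q)$ in part (2) is not recoverable as described: at the borderline the weight $\langle\omega\rangle^{-s}$ with $s=n/2$ is not in $L^2(\mathbb{R}^n)$, so the Cauchy--Schwarz step does not become ``lossless'' --- it becomes vacuous --- and ordinary complex interpolation between a trivial endpoint and a strictly sub-critical one can only produce strict inequalities. A further soft spot is the $p=\infty$ slice of $I_3^*$: since $L^\infty\not\hookrightarrow W^{\infty,1}$, one cannot ``pull out $\|f\|_{L^\infty}$ pointwise''; the Bessel potential must genuinely interact with the window. To close these gaps you would need either a direct argument on the sharp line $q=2$ (for instance starting from the exact identity $\|f\|_{W^{p,2}}=\|(|f|^2\ast|\tilde g|^2)^{1/2}\|_{L^p}$ combined with fractional integration) or an analytic-family (Stein) interpolation in the parameter $z$ of $\langle D\rangle^{-z}$, which is the standard device for preserving endpoint exponents.
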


\begin{thm}[\cite{cun2}]
Let $1\leq p,q\leq\infty$ and $s\in\mathbb{R}.$ Then $W^{p,q}\hookrightarrow L_s^p$ if one of the following conditions is satisfied.
\begin{enumerate}
\item $p<q, q>2$ and $s< n\tau_2(p,q);$
\item $p\neq \infty,$ min$(1/p,1/2)\leq1/q$ and $s\leq n\tau_2(p,q);$
\item $p=\infty, q=1$ and $s\leq n\tau_2(\infty,1);$
\item $p=\infty, q\neq1$ and $s< n\tau_2(1,q).$
\end{enumerate}
%Conversely, if $W^{p,q}\hookrightarrow L_s^p$, then $s\leq n\tau_2(p,q).$

\end{thm}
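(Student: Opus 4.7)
The plan is to deduce Theorem 3.2 from Theorem 3.1 by duality. For $1<p,q<\infty$, the spaces $L^p_s$ and $W^{p,q}$ are reflexive, with $(L^p_s)'=L^{p'}_{-s}$ and $(W^{p,q})'=W^{p',q'}$ (the latter via Lemma 2.1(8), since $\mathcal{W}^{p,q}=W^{p,q}$ in this range by Lemma 2.1(2)). Therefore the continuous embedding $W^{p,q}\hookrightarrow L^p_s$ at $(p,q,s)$ is equivalent to its adjoint $L^{p'}_{-s}\hookrightarrow W^{p',q'}$, which is an instance of Theorem 3.1 applied at the dual parameters $(p',q',-s)$.

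The heart of the proof is then a region-by-region algebraic check that the four conditions of Theorem 3.2 at $(p,q,s)$ correspond, under $(p,q,s)\mapsto(p',q',-s)$, to the four conditions of Theorem 3.1. Using $1/r'=1-1/r$, the ``$\max\le$'' defining inequality of $I_j$ translates into the ``$\min\ge$'' defining inequality of $I^*_j$: for example $\max(1/p',1/2)\le 1/q$ is equivalent to $\min(1/p,1/2)\ge 1/q'$, and similarly for $I_2/I^*_2$ and $I_3/I^*_3$. One then verifies $\tau_2(p,q)=-\tau_1(p',q')$ in each region: both vanish on $I_1$; on $I_2$ one has $\tau_2=1/p+1/q-1$ and $\tau_1(p',q')=1/p'+1/q'-1=1-1/p-1/q$; on $I_3$ one has $\tau_2=1/q-1/2$ and $\tau_1(p',q')=1/q'-1/2=1/2-1/q$. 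The strict vs.\ non-strict thresholds and the excluded parameter restrictions of items (1)--(4) in the two theorems then line up under the $(p,q)\leftrightarrow(p',q')$ swap.

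The main obstacle is the endpoint $p=\infty$ in items (3) and (4), since Lemma 2.1(8) requires $p,q\ne\infty$ and $\mathcal{S}$ fails to be dense in $W^{\infty,q}$. For item (4), where $q\ne 1$ gives $q'\ne\infty$, one still has $\mathcal{W}^{1,q'}=W^{1,q'}$ and $(W^{1,q'})'=W^{\infty,q}$ from Lemma 2.1(8), so dualizing the embedding $L^1_{-s}\hookrightarrow W^{1,q'}$ (furnished by Theorem 3.1(4)) yields $W^{\infty,q}\hookrightarrow L^\infty_s$ at once. Item (3), where $q=1$ makes even this pairing break down, must be handled by hand: the elementary Fourier-inversion bound $\|f\|_{L^\infty}\lesssim\|f\|_{W^{\infty,1}}$, obtained from $|(f\cdot T_yg)(x)|\le\|\mathcal{F}(f\cdot T_yg)\|_{L^1}$ evaluated at a point where $T_yg$ does not vanish, gives the case $s=0$, and the trivial inclusion $L^\infty\hookrightarrow L^\infty_s$ for $s\le 0$ supplies the full range $s\le n\tau_2(\infty,1)=0$. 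This endpoint bookkeeping, rather than the duality itself, is the delicate step.
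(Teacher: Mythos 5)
The paper does not prove this statement at all: Theorem 3.2 is imported verbatim from \cite{cun2}, so there is no in-paper argument to compare against. Judged on its own terms, your route --- reduce Theorem 3.2 to Theorem 3.1 by duality --- is the natural one (and is the standard way the two halves of such an embedding pair are related). Your bookkeeping is correct: the identities $\tau_2(p,q)=-\tau_1(p',q')$ and the correspondence $I_j\leftrightarrow I_j^{*}$ under $(p,q)\mapsto(p',q')$ check out in all three regions, the by-hand treatment of $(p,q)=(\infty,1)$ via $\|f\|_{L^\infty}\lesssim\|f\|_{W^{\infty,1}}$ is fine, and you have (correctly, and silently) read the paper's ``$\tau_2(1,q)$'' in item (4) as the typo it appears to be for $\tau_2(\infty,q)$.

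The gap is that items (1) and (2) are not confined to $1<p,q<\infty$, so the ``reflexive spaces, embedding equivalent to its adjoint'' framing does not cover their full scope. Item (2) admits $p=1$ and $q=1$; item (1) admits $q=\infty$ with any $p<q$ (e.g.\ $W^{2,\infty}\hookrightarrow L^2_s$ for $s<-n/2$), and there $(W^{p,\infty})'\neq W^{p',1}$ and $\mathcal{S}$ is not dense in $W^{p,\infty}$, so neither reflexivity nor Lemma 2.1(8) applies as you invoke it. Most of these cases are rescued by exactly the one-sided predual trick you already use for item (4): Theorem 3.1 gives $L^{p'}_{-s}\hookrightarrow W^{p',q'}$ with $p',q'<\infty$, the image contains $\mathcal{S}$ and hence lands densely in $\mathcal{W}^{p',q'}$, and dualizing yields $W^{p,q}=(\mathcal{W}^{p',q'})'\hookrightarrow L^p_s$ --- you should say explicitly that this, not the two-sided equivalence, is the engine for every non-reflexive case. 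The one corner that escapes even this is $p=1$, $q=\infty$ in item (1) ($W^{1,\infty}\hookrightarrow L^1_s$, $s<-n/2$): Lemma 2.1(8) requires $p,q\neq\infty$ and so does not exhibit $W^{1,\infty}$ as a dual space, and $\mathcal{S}$ is dense in neither $W^{1,\infty}$ nor its would-be dual partner $L^{\infty}_{-s}$. There one must prove the inequality $\|f\|_{L^1_s}\leq C\|f\|_{W^{1,\infty}}$ for $f\in\mathcal{S}$ directly from the pairing estimate $|\langle f,h\rangle|\leq\|f\|_{W^{1,\infty}}\|h\|_{W^{\infty,1}}$ together with Theorem 3.1 at $(\infty,1,-s)$, and then extend to general $f$ by a regularization/Fatou argument rather than by density. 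Without that, the proof as written does not deliver the full statement.
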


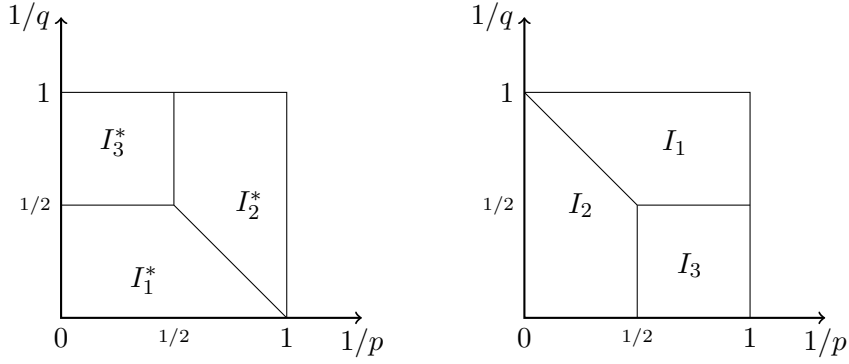
\begin{figure}
\begin{tikzpicture}[scale=2]
    % Draw axes
    \draw [<->,thick] (0,2) node (yaxis) [left] {$1/q$}
        |- (2,0) node (xaxis) [below] {$1/p$};
\path[draw] (0,1.5) node[left] {1} -- (1.5,1.5) -- (1.5,0) node[below] {$1$};
\path[draw] (1.5,0)  -- (1.5/2,1.5/2) --(1.5/2,1.5);
\path[draw] (1.5/2,1.5/2) -- (0,1.5/2) ;
%\path[draw] (0,0) -- (0,1.5/2) node[left]  {\tiny 1/2};
\node [left]at (0,1.5/2) {\tiny 1/2};
\node[below] at (1.5/2,0) {\tiny 1/2};
\node[below] at (0,0) {0};
\node at (3/4+.5,1.5/2) {$I^{\ast}_2$};
\node at (3/4-.2,1/4) {$I^{\ast}_1$};
\node at (3/4-.4,1+1/6) {$I^{\ast}_3$};
\end{tikzpicture}
\quad
\quad
\begin{tikzpicture}[scale=2]
    % Draw axes
    \draw [<->,thick] (0,2) node (yaxis) [left] {$1/q$}
        |- (2,0) node (xaxis) [below] {$1/p$};
\path[draw] (0,1.5) node[left] {1} -- (1.5,1.5) -- (1.5,0) node[below] {$1$};
\path[draw] (1.5/2,0)  -- (1.5/2,1.5/2);
\path[draw] (1.5/2,1.5/2) -- (1.5,1.5/2)  ;
\path[draw] (1.5/2,1.5/2) -- (0,1.5)  ;
%\path[draw] (0,0) -- (0,1.5/2) node[left]  {\tiny 1/2};
\node [left]at (0,1.5/2) {\tiny 1/2};
\node[below] at (0,0) {0};
\node at (1.5/4,1.5/2) {$I_2$};
\node at (1+.1,1/4+.1) {$I_3$};
\node at (1,1+1/6) {$I_1$};
\node[below] at (1.5/2,0) {\tiny 1/2};
\end{tikzpicture}
\caption{The index sets for  $L^p_s-W^{p,q}$ inclusion}
\label{fig1}
\end{figure}

In the work of Cordero, Tabacco and Wahlberg \cite[Corollary 3.11]{ctw13}, they showed how symbols in modulation spaces $M^{p,q}(\mathbb{R}^{2n})$ produces bounded Fourier integral operators in Wiener amalgam spaces $W^{p,q}(\mathbb{R}^{n})$. Choosing the phase $\Phi(x,\xi)=x\cdot\xi$, this result reduces to boundedness of pseudo-differential operators of Kohn-Nirenberg correspondence. For the ease of reference, we write a particular case of their result as follows. 

\begin{cor}
Given $\sigma\in M^{\infty,1}(\mathbb{R}^{2n}).$ Then the operator $\sigma(x,D)$ extends to a bounded operator on Wiener amalgam spaces $W^{p,q}(\mathbb{R}^{n}).$
\end{cor}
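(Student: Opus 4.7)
The plan is to obtain this statement as an essentially immediate consequence of the cited result of Cordero, Tabacco and Wahlberg \cite{ctw13}, which establishes boundedness on Wiener amalgam spaces of Fourier integral operators whose amplitudes lie in $M^{\infty,1}(\mathbb{R}^{2n})$ and whose phase functions satisfy standard tame (smooth, polynomially controlled, non-degenerate mixed Hessian) hypotheses. The pseudo-differential case then corresponds to choosing the phase function in the cleanest possible way.

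First, I would write the Kohn--Nirenberg operator
$$
\sigma(x,D)f(x)=\int_{\mathbb{R}^n} e^{2\pi i x\cdot\xi}\sigma(x,\xi)\widehat{f}(\xi)\,d\xi
$$
as a Fourier integral operator with amplitude $\sigma(x,\xi)$ and linear phase $\Phi(x,\xi)=x\cdot\xi$, so that the corollary follows from the general FIO statement applied to this particular $\Phi$. Second, I would check that $\Phi(x,\xi)=x\cdot\xi$ trivially satisfies the tame-phase hypotheses: it is smooth, all derivatives of order $\geq 3$ vanish (so polynomial control is automatic), and $|\det\partial^{2}_{x\xi}\Phi|=1$ guarantees the required non-degeneracy. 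With these hypotheses verified, invoking \cite{ctw13} with the amplitude class $M^{\infty,1}$ yields boundedness of $\sigma(x,D)$ on every $W^{p,q}(\mathbb{R}^n)$.

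The main (and very mild) obstacle would be matching conventions: making sure the formulation of the FIO in \cite{ctw13} coincides up to harmless normalization with the Kohn--Nirenberg convention used here, and that the cited corollary is stated for the full range $1\leq p,q\leq\infty$ rather than a restricted subrange.

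A self-contained alternative, avoiding black-box appeal to \cite{ctw13}, would be to Gabor-expand $\sigma$ with respect to a tight Gabor frame $\{\pi(\lambda)\Phi\}_{\lambda\in\Lambda}$ on $\mathbb{R}^{2n}$. The Sj\"ostrand condition $\sigma\in M^{\infty,1}$ translates to $\ell^{\infty,1}$-summability of the Gabor coefficients, and standard time--frequency analysis expresses $\sigma(x,D)$ as a norm-convergent superposition of phase-space shifts on $\mathbb{R}^n$. Since translations and modulations act isometrically on every $W^{p,q}(\mathbb{R}^n)$, the Gabor expansion converges in operator norm, and the $\ell^{\infty,1}$-control of the coefficients produces the desired bound uniformly in $(p,q)$.
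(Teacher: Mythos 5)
Your primary argument is exactly the paper's: the corollary is obtained by specializing Cordero--Tabacco--Wahlberg's Fourier integral operator boundedness result on $W^{p,q}$ to the linear phase $\Phi(x,\xi)=x\cdot\xi$, for which the tame-phase hypotheses hold trivially. The Gabor-frame alternative you sketch is a reasonable bonus, but the main route matches the paper's (essentially one-line) derivation.
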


\begin{proof}[Proof of IF part on Theorem 1.1]
First, we let
\begin{equation*}
q=2\quad \text{if } 1\leq p\leq2.
\end{equation*}
Then using the embedding results Theorem 3.1 and Theorem 3.2, we can say that $W^{p,q}\hookrightarrow L^p$ and $L^p_s\hookrightarrow W^{p,q}$ whenever $s\geq n(1/p+1/q-1)$ for $1\leq p\leq2$. The desired boundedness now follows from the following commutative diagram.

\begin{equation*}
\begin{CD}
W^{p,q} @>\sigma>> W^{p,q}\\
@AInclusionAA @VVInclusionV\\
L^p_s @>\sigma>> L^p
\end{CD}
\end{equation*}

Substituting the appropriate value of $q$ we have $s\geq n(1/p-1/2)$ for $1\leq p\leq2$. By Remark 2.1 the result for $p>2$ follows by duality on the corresponding Weyl quantization.

\end{proof}

\textbf{Remark 3.1.} The inclusion relations between $L^p-$Sobolev spaces and modulation spaces $M^{p,q}$ are known due to \cite{kob}. Using a similar argument as above, one can conclude boundedness results but with worse estimates for $s$. Namely, $s\geq 2n|1/p-1/2|$.

\section{Necessary condition}
%\begin{lem}\label{tes}
%Let $h\in\mathcal{C}_0^{\infty}$, and consider the family of functions
%$$h_{\lambda}(x)=h(x)e^{-i\pi\lambda|x|^2},\quad \lambda\geq1.$$
%Then, $\hat{h}(\xi)=O(\lambda^{-n/2})$
%\end{lem}

Boundedness results of pseudo-differential operators having symbols in weighted modulation spaces acting on $L^p$ spaces is equivalent to boundedness results of operators with symbols in unweighted modulation spaces acting on $L^p-$Sobolev spaces. More specifically, we have the following proposition.

\begin{pro}
Let $\sigma(x,D)$ and $\tilde{\sigma}(x,D)$ be pseudodifferential operators such that
$$\tilde{\sigma}(x,\xi)=\sigma(x,\xi)\agl{\xi}^s,\quad x,\xi\in \mathbb{R}^n,s\in \mathbb{R}.$$
Then
\begin{enumerate}
\item the operator $\sigma(x,D)$ is bounded from $L^p(\mathbb{R}^n)$ to itself if and only if $\tilde{\sigma}(x,D)$ is bounded from $L^p_s(\mathbb{R}^n)$ to $L^p(\mathbb{R}^n)$.
\item \begin{equation*}
\sigma\in M^{\infty,1}_{\nu_{0,s}\otimes 1}(\mathbb{R}^{2n})\Longleftrightarrow \tilde{\sigma}\in M^{\infty,1}(\mathbb{R}^{2n}).
\end{equation*}
\end{enumerate}
\end{pro}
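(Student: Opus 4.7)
I would directly compute, starting from the definition of the pseudo-differential operator, that
\[
\tilde\sigma(x,D)f(x) = \int e^{2\pi i x\cdot\xi}\sigma(x,\xi)\agl{\xi}^s\widehat{f}(\xi)\, d\xi = \sigma(x,D)\bigl(\agl{D}^s f\bigr)(x),
\]
so that $\tilde\sigma(x,D) = \sigma(x,D)\circ \agl{D}^s$ as operators on $\mathcal{S}(\rn{})$. Because the $L^p$-Sobolev norm is defined precisely as $\|f\|_{L^p_s}=\|\agl{D}^s f\|_{L^p}$, the Bessel potential $\agl{D}^s$ is an isometric isomorphism $L^p_s(\rn{})\to L^p(\rn{})$, and the equivalence in part~(1) follows immediately by composition.

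\textbf{Plan for Part (2).} I would reduce the claim to showing that multiplication by the weight $\nu_{0,s}(x,\xi)=\agl{\xi}^s$, viewed as a function on $\mathbb{R}^{2n}$, carries $M^{\infty,1}_{\nu_{0,s}\otimes 1}(\mathbb{R}^{2n})$ isomorphically onto $M^{\infty,1}(\mathbb{R}^{2n})$. Fix a Schwartz window $G$ on $\mathbb{R}^{2n}$; then
\[
V_G\tilde\sigma(x,\xi,\zeta_1,\zeta_2) = \int \sigma(y,\eta)\,\agl{\eta}^s\,\overline{G(y-x,\eta-\xi)}\,e^{-2\pi i(y\zeta_1+\eta\zeta_2)}\, dy\, d\eta.
\]
The key step is to transfer the pointwise factor $\agl{\eta}^s$ out to the base point $\xi$ through Peetre's inequality $\agl{\eta}^s\leq C_s\agl{\xi}^s\agl{\eta-\xi}^{|s|}$, valid for every $s\in\mathbb{R}$. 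The remainder $\agl{\eta-\xi}^{|s|}$ can then be absorbed into the enlarged but still Schwartz window $G_s(u,v):=G(u,v)\agl{v}^{|s|}$. This leads to the norm equivalence
\[
\|V_G\tilde\sigma\|_{L^\infty_{x,\xi}L^1_{\zeta}}\;\asymp\;\|V_G\sigma\cdot(\nu_{0,s}\otimes 1)\|_{L^\infty_{x,\xi}L^1_{\zeta}},
\]
which is exactly the statement $\tilde\sigma\in M^{\infty,1}(\mathbb{R}^{2n})\iff\sigma\in M^{\infty,1}_{\nu_{0,s}\otimes 1}(\mathbb{R}^{2n})$. The reverse inequality needed for the converse is obtained by rerunning the argument with $s$ replaced by $-s$ and with the roles of $\sigma$ and $\tilde\sigma$ interchanged.

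\textbf{Main obstacle.} The delicate point is the upgrade from the pointwise Peetre bound on windows to a norm bound on STFTs: pulling absolute values inside the oscillatory integral defining $V_G$ discards cancellation, so the estimate is not literally pointwise. I would handle this using two standard facts from modulation-space theory. First, the window-independence of the $M^{p,q}_m$-norm, valid because the weight $\nu_{0,s}\otimes 1$ is moderate (indeed $\agl{\xi+\xi'}^s\leq C\agl{\xi}^s\agl{\xi'}^{|s|}$); second, the convolution/change-of-window relations between STFTs computed with different Schwartz windows, which allow one to replace $G$ by the modified window $G_s$ without enlarging the modulation norm. Equivalently, one may argue via Gabor-frame coefficients, on which the weight $\agl{\xi}^s$ simply acts as a pointwise multiplication. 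With either device, the two-sided norm equivalence falls out cleanly and the proof of part~(2) is complete.
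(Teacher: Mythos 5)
Your proof of part (1) is exactly the paper's: the factorization $\tilde\sigma(x,D)=\sigma(x,D)\circ\agl{D}^s$ together with the fact that $\agl{D}^s:L^p_s\to L^p$ is an isomorphism is precisely the commutative diagram the paper draws, so nothing to compare there. The real difference is in part (2): the paper proves nothing itself, it just cites Toft's Corollary 2.3, which states verbatim that multiplication by $\agl{\xi}^s$ is an isomorphism from $M^{\infty,1}_{\nu_{0,s}\otimes 1}(\mathbb{R}^{2n})$ onto $M^{\infty,1}(\mathbb{R}^{2n})$, whereas you sketch a direct proof of that lemma. Your sketch is the standard argument and is correct in outline, so what you buy is self-containedness; what it costs is that you must actually execute the delicate step you flagged. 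Be aware that it is slightly worse than you describe: after Peetre's inequality the factor $\agl{\eta}^s/\agl{\xi}^s$ does not produce a single enlarged window $G_s(u,v)=G(u,v)\agl{v}^{|s|}$ but rather a family $G_\xi(u,v)=G(u,v)\,\agl{v+\xi}^s/\agl{\xi}^s$ depending on the base point $\xi$; the correct fix is to check that this family is bounded in $\mathcal{S}(\mathbb{R}^{2n})$ uniformly in $\xi$ (which holds because all $v$-derivatives of $\agl{v+\xi}^s/\agl{\xi}^s$ are $O(\agl{v}^{|s|})$ uniformly in $\xi$) and then invoke the pointwise convolution relation $|V_{G_\xi}\sigma|\le|V_{G_0}\sigma|\ast|V_{G_\xi}G_0|$ with $\sup_\xi\|V_{G_\xi}G_0\|_{W(L^\infty,L^1)}<\infty$, rather than literal window-independence of the norm (which applies only to a fixed window). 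Your alternative suggestion via Gabor coefficients also works. One notational slip: the $M^{\infty,1}$-norm is $L^1_{\zeta}\bigl(L^\infty_{x,\xi}\bigr)$, i.e.\ one integrates the supremum over $(x,\xi)$ in $\zeta$, not $L^\infty_{x,\xi}L^1_{\zeta}$ as written; since your estimate is a pointwise domination of STFTs by a convolution, it yields the bound in either mixed norm, so this does not affect the argument.
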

\begin{proof}
The proof of (1) is a direct consequence of the following commutative diagram. Indeed, the vertical arrows define isomorphisms:
\begin{equation*}
\begin{CD}
L^p @>\sigma>> L^p\\
@A\agl{D}^sAA @VVIdentityV\\
L^p_s @>\tilde{\sigma}>> L^p
\end{CD}
\end{equation*}
Also, by \cite[Corollary 2.3]{tof2}, multiplication by a weight function $\eta(x,\xi,\zeta_1,\zeta_2)=\agl{\xi}^s $, $x,\xi,\zeta_1,\zeta_2\in \mathbb{R}^n$ is an isomorphism from $M^{\infty,1}_{\nu_{0,s}\otimes 1}(\mathbb{R}^{2n})$ to $M^{\infty,1}(\mathbb{R}^{2n}),$ thus $(2)$.
\end{proof}
\textbf{Remark.} For the general case of Fourier integral operators of Proposition 4.1, we refer the reader to \cite[Proposition 2.8]{cor}, where the authors rephrased boundedness of FIO with symbols in weighted modulation spaces acting on unweighted modulation spaces by FIO with symbols in unweighted modulation spaces acting on weighted modulation spaces.

We recall the following theorem by H\"ormander which gives the critical decreasing order for the $L^p$ boundedness of pseudo-differential operators $\sigma(x,D)$ in $\textnormal{Op} (S^s_{\rho,\delta}).$
\begin{thm}[H\"ormander]

Let $0\leq \delta\leq \rho\leq 1$ and $\delta<1$. Then
$$\textnormal{Op} (S^s_{\rho,\delta})\subset \mathcal{L}(L^p(\mathbb{R}^n)) \Longrightarrow s\leq -n(1-\rho)|1/p-1/2|$$
\end{thm}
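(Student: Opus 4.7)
The statement is H\"ormander's classical sharpness theorem, so the plan is to prove the contrapositive by constructing an explicit counterexample. That is, for every $s > -n(1-\rho)|1/p-1/2|$, I would exhibit a symbol in $S^s_{\rho,\delta}$ whose pseudo-differential operator fails to extend boundedly to $L^p(\mathbb{R}^n)$. Since $S^s_{\rho,0} \subset S^s_{\rho,\delta}$ whenever $\delta \geq 0$, it suffices to produce an $x$-independent counterexample, i.e., a Fourier multiplier. Following the classical Hirschman--Wainger--Miyachi--Peral construction used in \cite{hor67}, I would take
$$m(\xi) = \phi(\xi)\,|\xi|^s\, e^{i|\xi|^{1-\rho}},$$
where $\phi \in C^\infty(\mathbb{R}^n)$ is supported in $\{|\xi| \geq 1\}$ and identically $1$ for $|\xi| \geq 2$.

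The first check is that $m \in S^s_{\rho,0}$. Using the chain rule together with the elementary homogeneity bound $|\partial^\beta_\xi |\xi|^{1-\rho}| \lesssim |\xi|^{1-\rho-|\beta|}$, an induction on $|\beta|$ shows that each derivative of $e^{i|\xi|^{1-\rho}}$ gains at most a factor of $|\xi|^{-\rho}$ per order, yielding $|\partial^\beta_\xi m(\xi)| \lesssim \langle\xi\rangle^{s-\rho|\beta|}$ as required. The heart of the argument is then the asymptotic analysis of the convolution kernel $K = \mathcal{F}^{-1}m$. I would perform a Littlewood--Paley decomposition $m = \sum_{j\geq 0} m_j$ with $m_j$ supported on $|\xi|\sim 2^j$, and apply the method of stationary phase to each $K_j = \mathcal{F}^{-1}m_j$. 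The phase $2\pi x\cdot\xi + |\xi|^{1-\rho}$ has a unique critical point producing concentration of $K_j$ on the annulus $|x| \sim 2^{-j\rho}$, with amplitude $\sim 2^{j(s + n(1-\rho)/2)}$ there and rapid decay outside.

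From this description, the $L^p$ non-boundedness of convolution with $K$ is established by comparing $L^p$ masses of the annular pieces (for $1 \leq p < 2$) and by duality for $p > 2$; the case $p = 2$ is of course trivial since $|m|$ is only polynomially bounded. The main obstacle, and the reason the proof is delicate despite its classical nature, lies in tracking oscillation, amplitude, and annulus size simultaneously through the dyadic summation: one must arrange that the annular pieces add rather than cancel, which is typically done by testing against a suitable lacunary combination of the $m_j$, or alternatively by rescaling a single dyadic block and invoking Khintchine's inequality. A clean route is to reduce directly to the Miyachi--Peral theorem on multipliers of the form $e^{i|\xi|^a}/\langle\xi\rangle^b$, which identifies the sharp threshold $b \geq na|1/p-1/2|$; setting $a = 1 - \rho$ and $b = -s$ gives exactly the forbidden region of H\"ormander's bound.
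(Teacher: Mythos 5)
First, a point of comparison: the paper does not prove this statement at all --- Theorem 4.1 is quoted as a known result of H\"ormander with a citation to \cite{hor67}, so there is no internal proof to measure your argument against. Your proposal is therefore judged on its own merits as a reconstruction of the classical sharpness argument, and for $0<\rho<1$ it is essentially the right route: the symbol $\phi(\xi)|\xi|^{s}e^{i|\xi|^{1-\rho}}$ does lie in $S^{s}_{\rho,0}\subset S^{s}_{\rho,\delta}$, and the reduction to the Miyachi--Peral theorem with $a=1-\rho$, $b=-s$ correctly identifies the threshold $n(1-\rho)|1/p-1/2|$ in that range. (A small computational slip: on the critical annulus $|x|\sim 2^{-j\rho}$ the pointwise amplitude of $K_j$ is $\sim 2^{j(s+n(1+\rho)/2)}$, not $2^{j(s+n(1-\rho)/2)}$; the latter is the $L^1$-mass of $K_j$ on that annulus. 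This does not affect the conclusion since you ultimately defer to Miyachi--Peral.)

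The genuine gap is at the endpoint $\rho=0$, which the theorem statement includes and which is, moreover, the \emph{only} case this paper actually uses: the converse of Theorem 1.1 is obtained via the embedding $S^{-s}_{0,0}\hookrightarrow M^{\infty,1}_{\nu_{0,s}\otimes 1}$, i.e.\ by invoking Theorem 4.1 with $\rho=\delta=0$. When $\rho=0$ your phase becomes $|\xi|$, which is homogeneous of degree one; its Hessian is degenerate (rank $n-1$, vanishing in the radial direction), the critical set collapses from a shrinking annulus to the fixed sphere $|x|=1/2\pi$, and stationary phase gains only $(n-1)/2$ powers instead of $n/2$. Correspondingly, Miyachi's theorem for $a=1$ gives the threshold $(n-1)|1/p-1/2|$, not $n|1/p-1/2|$ --- indeed $e^{i|\xi|}\langle\xi\rangle^{-b}$ \emph{is} an $L^p$ multiplier for $b=(n-1)|1/p-1/2|$, so your candidate symbol is bounded throughout a range where the theorem asserts unboundedness must occur. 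To close the $\rho=0$ case one needs a different example: either a genuinely $x$-dependent symbol, or a multiplier of lacunary/random-sign type, e.g.\ $m=\sum_j 2^{-js}\sum_{|k|\sim 2^j}\epsilon_{j,k}\psi(\xi-k)$, whose dyadic kernels have $L^1$ norm $\gtrsim 2^{j(n/2-s)}$ by Khintchine, yielding the full $n|1/p-1/2|$ loss. (The opposite endpoint $\rho=1$, where the phase degenerates to a constant, is harmless: there the claim is $s\le 0$ and the unoscillating multiplier $\phi(\xi)|\xi|^{s}$ already fails on $L^2$ for $s>0$.) As written, your argument proves the theorem only for $0<\rho<1$ and hence does not support the application made in Section 4.
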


\begin{proof}[Proof of the converse of Theorem 1.1]
By Proposition 4.1 it suffice to show that 
\begin{equation}\label{op}
\textnormal{Op} (M^{\infty,1}_{\nu_{0,s}\otimes 1}(\mathbb{R}^{2n}))\subset \mathcal{L}(L^p(\mathbb{R}^n)) \Longrightarrow s\geq n|1/p-1/2|.
\end{equation}

In \cite{GC99}, Gr\"ochenig and Heil proved the following embedding $S^0_{0,0}\hookrightarrow M^{\infty,1}$. Using the fact that,$\agl{D}^{-s}M^{\infty,1}(\mathbb{R}^{2n})=M^{\infty,1}_{\nu_{0,s}\otimes 1}(\mathbb{R}^{2n}),\sigma(x,D)\agl{D}^{-s}\in \textnormal{Op} (S^{-s}_{0,0})$ whenever $\sigma\in S^{0}_{0,0},$ we conclude that $S^{-s}_{0,0}\hookrightarrow M^{\infty,1}_{\nu_{0,s}\otimes 1}(\mathbb{R}^{2n}).$ Using this inclusion on (\ref{op}) together with Theorem 4.1 gives the desired estimate.
\end{proof}

\bibliographystyle{abbrv}
\bibliography{research}

\end{document}